\documentclass[12]{amsart}
\usepackage{amsfonts,amssymb,amscd}
\usepackage{amsxtra}
\usepackage{epsfig,amssymb}
\usepackage{mathrsfs}
\usepackage[mathscr]{eucal}
\usepackage{epsfig}
\usepackage{bm} 
\usepackage{ulem} 
\usepackage{hyperref}



\newcommand{\Z}{{\mathbb Z}}
\newcommand{\Q}{{\mathbb Q}}

\newcommand{\spec}{\mathrm{ Spec}\,}

\def\be{\kern -.1em}
\def\le{\kern 0.03em}
\def\lbe{\kern -.025em}
\def\e{\kern 0.08em}

 \newcommand{\ksep}{k^{\le\rm s}}
\newcommand{\kbar}{\overline{k}}

\newcommand{\mr}{\mathrm }
\newcommand{\mm}{\mathfrak m}

\newcommand{\fppf}{\mathrm{fppf}}

\newcommand{\Hom}{ \mathrm{Hom}}


\numberwithin{equation}{section}

\newtheorem{theorem}{Theorem}[section]

\newtheorem{proposition}[equation]{Proposition}
\newtheorem{corollary}[equation]{Corollary}

\theoremstyle{definition}

\theoremstyle{remark}
\newtheorem{remark}[equation]{Remark}



\begin{document}

\title{Groups of Components and Weil Restriction}
  
 \author{Alessandra Bertapelle}
\address{Universit\`a degli Studi di Padova, Dipartimento di Matematica, via Trieste 63, I-35121 Padova}
\email{alessandra.bertapelle@unipd.it}
\thanks{A. B. was partially supported by  PRAT 2013 ''Arithmetic of Varieties over Number Fields" CPDA 135371/13}

\author{Cristian D. Gonz\'alez-Avil\'es}
\address{Departamento de Matem\'aticas, Universidad de La Serena, Cisternas 1200, La Serena 1700000, Chile}
\email{cgonzalez@userena.cl}
\thanks{C.\,G.-A. was partially supported by Fondecyt grant 1120003.}
\date{\today}   

 \keywords{group of connected components, Weil restriction, Grothendieck's pairing} 
\subjclass[2010]{14L15, 11G99}
\begin{abstract} We determine the behavior under Weil restriction of the group of connected components of the special fiber of an arbitrary smooth group scheme (whose Weil restriction exists) over an arbitrary (commutative and unital) local ring. Applications to N\'eron models are given.
\end{abstract}

\maketitle 

\section{Introduction}

Let $R$ be a discrete valuation ring with fraction field $K$ and residue field $k$ and let $\ksep$ be a fixed separable algebraic closure of $k$. Let $A_{K}$ be an abelian variety over $K$ with N\'eron model $A$ over $R$ and let $\pi_{0}(\lbe A_{\le\rm s}\lbe)$ denote the $k$-group scheme of connected components of the special fiber $A_{\le\rm s}$ of $A$.  Let $A^{\vee}_{K}$ be the dual (i.e., Picard) variety of $A_{K}$ and write $A^{\vee}$ and $\pi_{0}(\lbe A^{\vee}_{\le\rm s}\lbe)$ for the corresponding objects associated to the abelian variety $A^{\vee}_{K}$. 
When $k$ is perfect, Grothendieck's pairing $\pi_{0}(\lbe A_{\le\rm s}\lbe )(\ksep)\times\pi_{0}(\lbe A^{\vee}_{\le\rm s})(\ksep)\to\Q/\Z$ \cite[IX, (1.2.2)]{sga7} has been verified to be perfect in all cases except when $K$ has positive characteristic and $k$ is infinite; see \cite[\S 1]{bb} for further comments. More recently  a  general proof of  the perfectness of Grothendieck's pairing has been announced in \cite{s}.   However, when $k$ is {\it imperfect}, there exist examples that show that the above pairing is no longer perfect. The first such examples were constructed in 
\cite[comment after Corollary 2.5]{bb} using the Weil restriction functor. It is for this reason, at least, that a full understanding of the behavior of the groups of connected components of abelian (or more general smooth group) varieties under Weil restriction is desirable.  Previously, this has been achieved only in the case of abelian varieties under various restrictions. See Remark \ref{rem}. In this paper we completely determine the behavior under Weil restriction of the group of connected components of the special fiber of an arbitrary (possibly non commutative) smooth group scheme over an arbitrary (possibly non noetherian) local ring. Our main result is the following

\begin{theorem}\label{main}
Let  $R^{\e\prime}/R$ be a finite flat extension of local rings with associated residue field extension $k^{\le\prime}\be/k$. Let $G^{\le\prime}$ be a smooth $R^{\e\prime}$-group scheme such that the Weil restriction $\Re_{\le R^{\e\prime}\be/\be R}(G^{\e\prime}\le)$ exists. Then $\Re_{\le k^{\le\prime}\be/k}(\pi_{0}(G^{\e\prime}_{\lbe{\rm{s}}}\le))$ exists and there exists a canonical isomorphism of \'etale $k$-group schemes
\[
\pi_{0}(\Re_{\le R^{\e\prime}\be/\be R}(G^{\e\prime}\le)_{{\rm{s}}})\simeq\Re_{\le k^{\le\prime}\be/k}(\pi_{0}(G^{\e\prime}_{\lbe{\rm{s}}}\le)).
\]
\end{theorem}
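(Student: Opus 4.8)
The plan is to factor the desired isomorphism through two ``reduction'' steps, each governed by the principle that the Weil restriction of a smooth surjective morphism with geometrically connected fibers is again smooth, surjective with geometrically connected fibers, hence induces an isomorphism on groups of connected components. I will use freely the standard properties of Weil restriction along a finite locally free ring extension: it commutes with base change, preserves smoothness, preserves smooth surjective morphisms, preserves fibered products (hence kernels and torsors), and sends vector groups to vector groups; I also use that the Weil restrictions below exist — this is where the hypothesis on $G'$ enters, the representability condition passing to base change. To begin, set $\bar R':=R'\otimes_R k$, an Artinian local ring that is module-finite over $k$ and has residue field $k'$, and put $H:=G'\times_{R'}\bar R'$, a smooth $\bar R'$-group scheme. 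Compatibility with base change gives a canonical identification
\[
\Re_{R'/R}(G')_{\mathrm s}=\Re_{R'/R}(G')\times_R k\simeq\Re_{\bar R'/k}(H),
\]
and since the left-hand side is representable so is the right-hand side; thus it suffices to compute $\pi_0\!\left(\Re_{\bar R'/k}(H)\right)$.

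\textbf{Step 1: removing the nilpotents of $\bar R'$.} I would first show that the reduction morphism $\Re_{\bar R'/k}(H)\to\Re_{k'/k}(G'_{\mathrm s})$ (note $H\times_{\bar R'}k'=G'_{\mathrm s}$) is smooth, surjective with geometrically connected fibers. Since $\bar R'\twoheadrightarrow k'$ has nilpotent kernel, it factors into finitely many square-zero extensions, so it is enough to treat one such, $B\twoheadrightarrow B_0$ with ideal $I$, and the corresponding $\mathcal G:=H\times_{\bar R'}B$: the induced homomorphism $\Re_{B/k}(\mathcal G)\to\Re_{B_0/k}(\mathcal G\times_B B_0)$ is surjective (by formal smoothness of $\mathcal G$, lifting along the square-zero thickening $T\times_k B_0\hookrightarrow T\times_k B$) and, by deformation theory, has kernel the vector group $T\mapsto\mathrm{Lie}(\mathcal G/B)\otimes_B I\otimes_k\mathcal O_T$ over $k$; a surjective homomorphism of smooth $k$-group schemes with vector-group kernel is faithfully flat, hence a torsor under that vector group, hence smooth, surjective with geometrically connected (affine-space) fibers. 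A composite of such morphisms retains these properties. Finally, a surjective homomorphism of smooth $k$-group schemes with geometrically connected fibers induces an isomorphism on $\pi_0$, because the preimage of the identity component of the target is an open, closed, geometrically connected subgroup of the source containing the identity, hence equals the source's identity component. So $\pi_0\!\left(\Re_{\bar R'/k}(H)\right)\simeq\pi_0\!\left(\Re_{k'/k}(G'_{\mathrm s})\right)$.

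\textbf{Step 2: pushing $\pi_0$ through the field extension.} Over $k'$, the canonical morphism $q\colon G'_{\mathrm s}\to\pi_0(G'_{\mathrm s})$ is the quotient by the identity component $(G'_{\mathrm s})^{0}$, hence a torsor under the smooth, geometrically connected $k'$-group scheme $(G'_{\mathrm s})^{0}$; applying $\Re_{k'/k}$ makes $\Re_{k'/k}(q)$ a torsor under $N:=\Re_{k'/k}\!\left((G'_{\mathrm s})^{0}\right)$. Here the key point is that $N$ is smooth and geometrically connected over $k$: this follows by base changing to $\kbar$, where $k'\otimes_k\kbar$ splits into a finite product of Artinian local $\kbar$-algebras with residue field $\kbar$, and applying Step 1 to the reduction morphisms of these factors. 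Consequently $\Re_{k'/k}(q)$ is smooth, surjective with geometrically connected fibers, hence an isomorphism on $\pi_0$. Since $\pi_0(G'_{\mathrm s})$ is \'etale over $k'$, the scheme $\Re_{k'/k}(\pi_0(G'_{\mathrm s}))$ — which exists, e.g.\ as the quotient $\Re_{k'/k}(G'_{\mathrm s})/N$, thereby giving the existence assertion of the theorem — is \'etale over $k$, hence is its own $\pi_0$. Combining, $\pi_0\!\left(\Re_{k'/k}(G'_{\mathrm s})\right)\simeq\Re_{k'/k}(\pi_0(G'_{\mathrm s}))$, and concatenating with Step 1 yields the isomorphism of the theorem. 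As every morphism used (reduction morphisms, and $\Re_{k'/k}$ applied to the canonical $q$) is canonical, so is the resulting isomorphism of \'etale $k$-group schemes.

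\textbf{Main obstacle.} The hard part is the claim in Step 1 that the reduction morphisms are smooth, surjective with geometrically connected fibers. The d\'evissage into square-zero extensions and the descent of these properties along $\kbar/k$ are routine; the real content is the computation, via the deformation theory of the smooth group scheme $\mathcal G$, that the kernel of each square-zero step is a vector group — equivalently, that Weil restriction along a finite locally free extension carries a vector group (more generally $\mathbb{V}(\mathcal F)$ for $\mathcal F$ coherent) to a vector group. A secondary, more bookkeeping-type issue, which matters in the non-noetherian generality of the statement, is to verify that all the Weil restrictions invoked genuinely exist.
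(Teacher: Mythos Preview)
Your argument is correct and reaches the same conclusion, but the route differs from the paper's. After the common base-change reduction to the finite local $k$-algebra $A=\bar R'=R'\otimes_{R}k$, the paper works directly over $A$: it applies $\Re_{A/k}$ to the sequence $1\to G^{0}\to G\to\pi_{0}(G)\to 1$ of $A$-group schemes, invokes \cite[Proposition~A.5.9]{cgp} as a black box to conclude that $\Re_{A/k}(G^{0})$ is geometrically connected (hence equals $\Re_{A/k}(G)^{0}$), uses \cite[Corollary~A.5.4(1)]{cgp} for surjectivity onto $\Re_{A/k}(\pi_{0}(G))$, and then appeals to the \'etale lemma (Proposition~\ref{et-eq}) to rewrite $\Re_{A/k}(\pi_{0}(G))$ as $\Re_{k'/k}(\pi_{0}(G_{\rm s}))$. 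You instead factor the passage as $\Re_{\bar R'/k}(H)\to\Re_{k'/k}(G'_{\rm s})\to\Re_{k'/k}(\pi_{0}(G'_{\rm s}))$, handling the nilpotents first via a square-zero d\'evissage and deformation theory, and then the field extension; in effect you reprove the CGP connectedness input from scratch. What the paper's approach buys is brevity and a clean separation of the \'etale step (Proposition~\ref{et-eq}) from the connectedness step; what your approach buys is self-containment---you do not need to import the CGP results, and your Step~1 simultaneously supplies both the connectedness of $\Re_{A/k}(G^{0})$ (after base change to $\kbar$) and, via the quotient description, the representability of the intermediate Weil restrictions that you flag as a bookkeeping concern. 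One small point worth tightening: in Step~2 the claim that $\Re_{k'/k}(q)$ is a torsor under $N$ presupposes its surjectivity, which you should justify (e.g.\ by smoothness of $q$ and Hensel lifting on $\kbar$-points, or by the same CGP corollary the paper uses); this is routine but not automatic from the torsor axiom alone.
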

The theorem applies, in particular, to smooth and quasi-projective $R^{\e\prime}$-group schemes since $\Re_{\le R^{\e\prime}\be/\be R}(G^{\e\prime}\le)$ is well-known to exist in this case. More general conditions that guarantee the existence of $\Re_{\le R^{\e\prime}\be/\be R}(G^{\e\prime}\le)$  can be found in \cite[\S7.6, Theorem 4, p.~194]{blr} and \cite[Theorem 2.15]{bga}.

The proof of the theorem, which is simpler than the proofs of the particular cases alluded to above, essentially relies only on basic properties of the Weil restriction functor and the following key fact: if  $k$ is a field, $R^{\e\prime}$ is a finite local $k$-algebra and $G^{\e\prime}$ is a smooth   $R^{\e\prime}$-group scheme with geometrically connected special fiber, then $\Re_{\le R^{\e\prime}\be/\lbe k}(G^{\e\prime}\le)$ is geometrically connected.

 Some applications of Theorem \ref{main} are discussed at the end of the paper. 

\medskip

We thank  Brian Conrad for helping us shorten our original proof of the main theorem. We also thank Bas Edixhoven and Dino Lorenzini for helpful comments.

\section{Preliminaries}

All rings considered in this paper are commutative and unital. If $X$ is a scheme, the topological space underlying $X$ will be denoted by $|X|$. The identity morphism of $X$ will be denoted by $1_{\be X}$.

Let $S$ be a scheme, let $X$ be an $S$-scheme and let $S^{\e\prime\prime}\to S^{\le\prime}\to S$ be morphisms of schemes. We will make the identification
\begin{equation}\label{id1}
(X\times_{S}S^{\e\prime}\e)\times_{S^{\le\prime}}S^{\e\prime\prime}=X\times_{S}S^{\e\prime\prime}.
\end{equation}
If $A\to B$ is a ring homomorphism and $X$ is an $A$-scheme,  
$X_{\be B}$ will denote $X\times_{\spec A}\spec B$.

Let $W\to S^{\e\prime}\to S$ be morphisms of schemes. If $Z$ is an $S$-scheme and $\textrm{pr}_{\le 1}\colon Z\be\times_{S}\lbe S^{\e\prime}\to Z$ is the first projection then, by the universal property of the fiber product, the map
\begin{equation}\label{bc}
\Hom_{S^{\prime}}(W,Z\be\times_{S}\be S^{\e\prime}\e)\overset{\be\!\sim}{\to}\Hom_{\le S}(W,Z\le),\quad g\mapsto \textrm{pr}_{1}\circ g,
\end{equation}
is a bijection.

Let $A$ be an artinian local ring with residue field $k$ and let $G$ be an $A$-group scheme {\it locally of finite type}. We will identify $G$ with the functor on $(\mr{Sch}/A)$ represented by $G$. Let $G_{\lbe \textrm{s}}:=G\!\times_{\spec A}\be\spec k$ be the special fiber of $G\e$\footnote{\label{foot} Here the subscript ``$\rm s$" indicates ``special fiber'' rather than the closed point of $\spec A$. We use this notation because it remains unchanged by a change of rings, i.e., if $A^{\prime}$ is another artinian local ring with residue field $k^{\e\prime}$ and $G^{\e\prime}$ is an $A^{\prime}$-group scheme, then it should be clear that $G^{\e\prime}_{\lbe \textrm{s}}$ means $G^{\e\prime}\!\times_{\spec A^{\prime}}\be\spec k^{\e\prime}$.}. Now let $|G|^{\le 0}\subset |G|$ denote the connected component of the identity $1\in G$ and define a subfunctor $G^{\e 0}$ of $G$ by
\begin{equation}\label{idcomp1}
G^{\e 0}(T)=\{u\in G(T)\colon u(|T|)\subseteq |G|^{\le 0}\},
\end{equation}
where $T$ is any $A$-scheme. Since $G$ is locally of finite type over $A$, \eqref{idcomp1} is represented by an open and normal subgroup scheme $G^{\e 0}$ of $G$ \cite[$\text{VI}_{\text{A}}$, Proposition 2.3.1]{sga3}. Note also that, since every subgroup scheme of $G$ is closed \cite[$\text{VI}_{\text{A}}$, Corollary 0.5.2]{sga3}, $G^{\e 0}$ is open and closed in $G$. Assume now that $G$ is locally of finite type and {\it flat} over $A$. Then, by \cite[$\text{VI}_{\text{A}}$, Proposition 5.5.1(i) and Theorem 3.2(vi)]{sga3}, the  fppf quotient sheaf of groups 
\[
\pi_{0}(G):=G^{\e 0}\e\backslash G 
\]
is represented by an \'etale $A$-group scheme and the canonical morphism 
\begin{equation}\label{pim}
p_{\le G}\colon G\to \pi_{0}(G)
\end{equation}
is faithfully flat and locally of finite presentation. Further, by \cite[$\text{VI}_{\text{B}}$, Proposition 3.3]{sga3}, there exists a canonical isomorphism of $k$-group schemes $(G_{\rm s}\lbe)^{0}\simeq (G^{\e 0})_{\rm s}$ and therefore $\pi_{0}(G_{\rm s}\le)\simeq\pi_{0}(G\le)_{\rm s}$.

Note that, if $G$ is {\it smooth} over $A$, then $G^{\e 0}$ is smooth as well and therefore \eqref{pim} is a smooth morphism by \cite[$\text{IV}_{4}$, Proposition 17.5.1]{ega} and \cite[${\rm{VI}}_{\rm B}$, Proposition 1.3]{sga3}. 

\medskip

Let $f\colon S^{\e\prime}\to S$ be a morphism of schemes and let $X^{\prime}$ be an $S^{\le\prime}$-scheme. We will say that {\it the Weil restriction of $X^{\prime}$ along $f$ exists}  (or, more concisely, that {\it $\Re_{S^{\le\prime}\be/S}(X^{\prime})$ exists}) if the contravariant functor
$(\mathrm{Sch}/S)\to(\mathrm{Sets}), T\mapsto\Hom_{
S^{\le\prime}}(T\times_{S}S^{\le\prime},X^{\le\prime}\le)$, is  representable, i.e., if there exists a pair $(\Re_{S^{\le\prime}\be/S}(X^{\prime}\e), q_{\le X^{\prime}\!,\e S^{\le\prime}\be/S})$, where $\Re_{S^{\le\prime}\be/S}(X^{\prime}\e)$ is an $S$-scheme and  $q_{\le X^{\prime}\be,\e S^{\le\prime}\be/S}\colon \Re_{S^{\le\prime}\be/S}(X^{\prime}\e)_{S^{\le\prime}}\to X^{\prime}$ is an $S^{\prime}$-morphism of schemes, such that the map
\begin{equation}\label{wr}
\Hom_{\le S}\e(T,\Re_{S^{\le\prime}\be/S}(X^{\le\prime}\e))\overset{\!\sim}{\to}\Hom_{
S^{\le\prime}}(T\!\times_{S}\!S^{\e\prime},X^{\le\prime}\e), \quad g\mapsto q_{\le X^{\prime}\be,\e S^{\le\prime}\be/S}\circ g_{\le S^{\le\prime}} 
\end{equation} is a bijection \cite[(1.1.8), p.~22]{ega1}. The pair $(\Re_{S^{\le\prime}\be/S}(X^{\prime}\e), q_{\le X^{\prime}\!,\e S^{\le\prime}\be/S})$ (or, more concisely, the scheme $\Re_{S^{\le\prime}\be/S}(X^{\prime}\e)$) is called the {\it Weil restriction of $X^{\prime}$ along $f$}. If $S^{\le\prime}=\spec A^{\le\prime}$ and $S=\spec A$ are affine, we will write $(\Re_{A^{\le\prime}/A}(X^{\prime}\le),q_{\le X^{\prime}\!,\e A^{\le\prime}\lbe/A})$ for $(\Re_{S^{\le\prime}\be/S}(X^{\prime}\le),q_{\le X^{\prime}\!,\e S^{\le\prime}\lbe/S})$ and refer to $\Re_{A^{\le\prime}/A}(X^{\prime}\le)$ as the {\it Weil restriction of $X^{\prime}$ along $A^{\le\prime}/A$}.

If $X^{\prime}$ is an $S^{\e\prime}$-scheme such that $\Re_{S^{\le\prime}\be/S}(X^{\prime}\le)$ exists and $T\to S$ is any morphism of schemes, then $\Re_{\le S^{\le\prime}_{T}\lbe/ T}\le(\e X^{\prime}\!\times_{S^{\prime}}\! S^{\e\prime}_{\le T})$ exists as well and \eqref{id1}, \eqref{bc} and \eqref{wr} yield a canonical isomorphism of $T$-schemes 
\begin{equation}\label{wrbc}
\Re_{S^{\le\prime}\be/S}(X^{\prime}\le)\times_{S}T\overset{\!\sim}{\to}\Re_{\le S^{\le\prime}_{T}\lbe/ T}\le(\e X^{\prime}\!\times_{S^{\prime}}\! S^{\e\prime}_{\le T}).
\end{equation}
Further, if $S^{\le\prime\prime}\to S^{\le\prime}\to S$ are morphisms of schemes and  $X^{\prime\prime}$ is an $S^{\le\prime\prime}$-scheme such that $\Re_{\e S^{\prime\prime}\be/S^{\le\prime}\be}\e(X^{\prime\prime})$ exists, then \eqref{wr} implies that $\Re_{S^{\prime}\be/S}(\Re_{\e S^{\prime\prime}\be/S^{\le\prime}\be}\e(X^{\prime\prime}))$ exists if, and only if,  $\Re_{\e S^{\prime\prime}\!/S} (X^{\prime\prime})$ exists. If this is the case, then there exists a canonical isomorphism of $S$-schemes 
\begin{equation}\label{wrcomp}
\Re_{S^{\prime}\be/S}(\Re_{\e S^{\prime\prime}\be/S^{\le\prime}\be}\e(X^{\prime\prime}))\overset{\!\lbe\sim}{\to}\Re_{\e S^{\prime\prime}\be/S} (X^{\prime\prime})  .
\end{equation}

Let $f\colon S^{\e\prime}\to S$ be a finite and locally free (i.e., flat and of finite presentation) morphism of schemes and let $X^{\e\prime}$ be an $S^{\e\prime}$-scheme. By \cite[\S7.6, Theorem 4, p.~194]{blr}, $\Re_{S^{\le\prime}\be/S}(X^{\prime}\e)$ exists if, for each point $s\in S$, every finite set of points of $X^{\le\prime}\times_{S}\spec k(s)$ is contained in an open affine subscheme of $X^{\prime}$. This condition is satisfied if $X^{\e\prime}$ is quasi-projective over $S^{\e\prime}$ \cite[II, Definition 5.3.1 and Corollary 4.5.4]{ega}. A weaker condition for the existence of $\Re_{S^{\le\prime}\be/S}(X^{\prime}\e)$ is given in \cite[Theorem 2.15]{bga}.

\begin{proposition}\label{et-eq} Let $k$ be a field and let $A$ be a finite local $k$-algebra with residue field $k^{\e\prime}$. Let $X$ be an \'etale $A$-scheme. Then the following hold.
\begin{itemize}
\item[(i)] $\Re_{k^{\le\prime}\be/A}(X_{\rm s})$ exists and is canonically isomorphic to $X$.
\item[(ii)] $\Re_{A/k}(X)$ and $\Re_{\le k^{\le\prime}\be/k}(X_{\rm s}\lbe)$ exist and are canonically isomorphic.
\end{itemize} 
\end{proposition}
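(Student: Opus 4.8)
The plan is to obtain (i) from the formal étaleness of $X$ over $A$ together with the fact that $\spec k'\hookrightarrow\spec A$ is an infinitesimal thickening, and then to deduce (ii) from (i) by means of the transitivity isomorphism \eqref{wrcomp}. The starting point is the structural observation that, since $A$ is a finite local $k$-algebra, it is Artinian, so its maximal ideal $\mm$ is nilpotent and the canonical closed immersion $\spec k'=\spec(A/\mm)\hookrightarrow\spec A$ is defined by a nilpotent ideal and is a homeomorphism on underlying spaces. Consequently, for every $A$-scheme $T$, the closed immersion $T_{\rm s}=T\times_{A}\spec k'\hookrightarrow T$ is again defined by a nilpotent ideal, namely $\mm\mathcal{O}_{T}$.

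For (i), I would fix an $A$-scheme $T$ and argue that $X$ represents the functor $T\mapsto\Hom_{k'}(T\times_{A}\spec k',X_{\rm s})$. Since $X$ is étale, hence formally étale, over $A$, the restriction map $\Hom_{A}(T,X)\to\Hom_{A}(T_{\rm s},X)$ is bijective: one reduces to the square-zero case by filtering $\mm\mathcal{O}_{T}$ by its powers and then applies the unique infinitesimal lifting property of $X/A$. Composing this bijection with the canonical bijection $\Hom_{A}(T_{\rm s},X)\overset{\sim}{\to}\Hom_{k'}(T_{\rm s},X\times_{A}k')=\Hom_{k'}(T_{\rm s},X_{\rm s})$ furnished by \eqref{bc} yields a bijection $\Hom_{A}(T,X)\overset{\sim}{\to}\Hom_{k'}(T\times_{A}\spec k',X_{\rm s})$ that is evidently natural in $T$. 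Hence $X$ represents the functor whose representability defines $\Re_{k'/A}(X_{\rm s})$, and tracking the image of $1_{X}$ through the chain of bijections shows that the associated structure morphism $q_{X_{\rm s},\,k'/A}$ may be taken to be $1_{X_{\rm s}}$. Conceptually, (i) is nothing but the topological invariance of the étale site (EGA $\text{IV}_{4}$, 18.1.2) made explicit.

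For (ii), I would first verify directly that $\Re_{k'/k}(X_{\rm s})$ exists. Indeed $X_{\rm s}$ is étale over the field $k'$, hence is a (possibly infinite) disjoint union of spectra of finite separable field extensions of $k'$; therefore every finite set of points of $X_{\rm s}$ lies in an open affine subscheme of $X_{\rm s}$, and the existence of $\Re_{k'/k}(X_{\rm s})$ follows from \cite[\S7.6, Theorem 4, p.~194]{blr}. Now I apply the transitivity statement preceding \eqref{wrcomp} to the tower $\spec k'\to\spec A\to\spec k$ and the $k'$-scheme $X_{\rm s}$: by (i), $\Re_{k'/A}(X_{\rm s})$ exists and is isomorphic to $X$, so the existence of $\Re_{A/k}\big(\Re_{k'/A}(X_{\rm s})\big)$ is equivalent to that of $\Re_{k'/k}(X_{\rm s})$, which has just been established. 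Thus $\Re_{A/k}(X)\cong\Re_{A/k}\big(\Re_{k'/A}(X_{\rm s})\big)$ exists, and \eqref{wrcomp} provides the canonical isomorphism $\Re_{A/k}(X)\overset{\sim}{\to}\Re_{k'/k}(X_{\rm s})$.

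The only step that is not purely formal is the verification of the hypothesis of the existence criterion of \cite[\S7.6, Theorem 4, p.~194]{blr} (finite sets of points of an étale scheme over a field, or over an Artinian local ring, lie in affine opens); I do not expect this to be a genuine obstacle. The essential content is the identification in (i) — that passing to the special fiber is an equivalence on étale $A$-schemes — which is classical, so I anticipate no serious difficulty; part (ii) is then a formal consequence of (i) and the base-change/transitivity formalism for Weil restriction recalled in the preliminaries.
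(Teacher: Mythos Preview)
Your proposal is correct and follows essentially the same route as the paper: part (i) is obtained from formal \'etaleness of $X/A$ together with the fact that $T_{\rm s}\hookrightarrow T$ is a nilpotent thickening (the paper packages your two bijections into the single map $g\mapsto g_{\rm s}$ and cites EGA $\text{IV}_{4}$, 17.1.2(iv)), and part (ii) is deduced exactly as you do, via transitivity \eqref{wrcomp} along $\spec k'\to\spec A\to\spec k$ and the existence criterion of \cite[\S7.6, Theorem 4]{blr} applied to the \'etale $k'$-scheme $X_{\rm s}\simeq\coprod_{i}\spec k'_{i}$.
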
 
\begin{proof} Let $T$ be any $A$-scheme. Since $T_{\rm s}\to T$ is a nilpotent immersion and $X$ is formally \'etale over $A$, the canonical map  $\Hom_{A}\e(T,X\le)\to\Hom_{\e k^{\prime}}\e(T_{\rm s},X_{\rm s}\le),\, g\mapsto g_{\e\rm s},$ is a bijection \cite[$\text{IV}_{4}$, Remark 17.1.2(iv)]{ega}. Thus
$(\Re_{k^{\le\prime}\be/A}(X_{\rm s}), q_{\le X_{\rm s}\lbe,\e k^{\le\prime}\be/A})=(X,1_{\be X_{\rm s}})$ is the Weil restriction of $X_{\rm s}$ along $k^{\e\prime}/A$. Assertion (i) is now clear. Now, by (i), $\Re_{\le A\lbe/k}(X)$ exists if, and only if, $\Re_{\le A\lbe/k}(\Re_{\e k^{\prime}\be/A}(X_{\rm s}))\simeq \Re_{\e k^{\prime}\be/k}(X_{\rm s})$ \eqref{wrcomp} exists and, if this is the case, then $\Re_{\le A\lbe/k}(X)$ and  $\Re_{\le k^{\le\prime}\be/k}(X_{\rm s}\lbe)$ are canonically isomorphic \'etale $k$-schemes. Thus it remains to show that 
$\Re_{\e k^{\prime}\be/k}(X_{\rm s})$ exists. By \cite[\S7.6, Theorem 4, p.~194]{blr}, it suffices to check that  any finite set of points of $X_{\rm s}$ is contained is an affine open subscheme of $X_{\rm s}$. Since $X_{\rm s}$ is an \'etale $k^{\e\prime}$-scheme, $X_{\rm s}$ is isomorphic to a sum $\coprod_{i\in I} \spec k^{\e\prime}_{i}$, where each $k^{\e\prime}_{i}$ is a finite and separable extension of $k^{\e\prime}$ \cite[$\text{IV}_{4}$, Corollary 17.4.2$({\text d}^{\prime}\e)$]{ega}. It is now clear that the stated condition holds.  
\end{proof}

\section{Proof of the main theorem}
An {\it extension of  local rings} is a flat homomorphism $R\to R^{\e\prime}$ of  local rings. Then $R\to R^{\e\prime}$ is faithfully flat and therefore injective \cite[Chapter 2, (4.A), Corollary, p.~27, and (4.C)(i), p.~28]{mat}. Let $\mm$ and $\mm^{\e\prime}$ denote the maximal ideals of $R$ and $ R^{\e\prime}$, respectively. Then $R\to R^{\e\prime}$ induces an extension of residue fields $k=R/\mm\hookrightarrow  k^{\e\prime}=R^{\e\prime}/\mm^{\e\prime}$.

\begin{proposition}\label{op4} Let $k$ be a field and let $A$ be a finite local $k$-algebra with residue field $k^{\e\prime}$. Let $G$ be a smooth $A$-group scheme  such that $\Re_{\le A/k}(G\le)$ exists. Then $\Re_{\e k^{\le\prime}\be/k}(\pi_{0}(G_{{\rm{s}}}\le))$ exists and there exists a canonical isomorphism of \'etale $k$-group schemes $\pi_{0}(\Re_{\le A/k}(G\le))\simeq\Re_{\e k^{\le\prime}\be/k}(\pi_{0}(G_{{\rm{s}}}\le))$.
\end{proposition}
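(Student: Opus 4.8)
The plan is to apply the functor $\Re_{A/k}$ to the short exact sequence $1\to G^{0}\to G\to\pi_{0}(G)\to 1$ of fppf $A$-group sheaves, in which $p_{G}\colon G\to\pi_{0}(G)$ is the canonical quotient, and to recognize the resulting morphism as the canonical map onto $\pi_{0}(\Re_{A/k}(G))$. Write $H=\Re_{A/k}(G)$, $E=\Re_{A/k}(\pi_{0}(G))$ and $\phi=\Re_{A/k}(p_{G})\colon H\to E$. First I would collect the formal inputs. Since $A$ is a finite, hence finite locally free, $k$-algebra, Weil restriction along $\spec A\to\spec k$ preserves smoothness \cite[\S7.6]{blr}, so $H$ is smooth --- in particular flat and locally of finite presentation --- over $k$. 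Since $\pi_{0}(G)$ is an \'etale $A$-scheme, Proposition \ref{et-eq}(ii) shows that $E$ exists, is \'etale over $k$, and is canonically isomorphic to $\Re_{k^{\prime}/k}(\pi_{0}(G_{\mathrm{s}}))$ (using the canonical identification $\pi_{0}(G)_{\mathrm{s}}=\pi_{0}(G_{\mathrm{s}})$). This already gives the existence assertion of the proposition and reduces it to producing a canonical isomorphism $\pi_{0}(H)\simeq E$.

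Because $\Re_{A/k}$ is right adjoint to base change it commutes with fibre products, hence with kernels, so $\ker\phi=\Re_{A/k}(G^{0})$; this equals the fibre product $H\times_{E}\spec k$ and so exists as a scheme, and it is an open and closed (normal) subgroup scheme of $H$ because the identity section is a connected component of the \'etale $k$-scheme $E$. The essential geometric ingredient now is the key fact recorded in the introduction, applied to the smooth $A$-group scheme $G^{0}$ whose special fibre $(G^{0})_{\mathrm{s}}=(G_{\mathrm{s}})^{0}$ is geometrically connected, being the identity component of a group scheme locally of finite type over the field $k^{\prime}$: it shows that $\ker\phi=\Re_{A/k}(G^{0})$ is geometrically connected over $k$. (One proves that fact by base changing to an algebraic closure $\bar k$ of $k$, decomposing $A\otimes_{k}\bar k$ into artinian local $\bar k$-algebras $B$ with residue field $\bar k$, and noting that for $X$ smooth over such a $B$ the natural morphism $\Re_{B/\bar k}(X)\to X_{\bar k}$ is a fibration with affine-space fibres, so that $\Re_{B/\bar k}(X)$ and $X_{\bar k}$ have the same connected components.) An open, connected subgroup scheme containing the identity is the identity component, so $\ker\phi=H^{0}$, and hence $\phi$ factors through the faithfully flat quotient map $p_{H}\colon H\to\pi_{0}(H)=H/H^{0}$.

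It remains to prove that $\phi\colon H\to E$ is faithfully flat, for then $E\simeq H/\ker\phi=H/H^{0}=\pi_{0}(H)$ canonically and the proposition follows. Now $\phi$ is locally of finite presentation (both $H$ and $E$ being so over $k$), and it is flat because $E$, being \'etale over $k$, is a disjoint union of spectra of fields, so on the preimage of each of these $\phi$ is a morphism to a field and hence flat. The one substantive point, which I expect to be the main obstacle, is surjectivity of $\phi$. I would check it after base change to an algebraic closure $\bar k$ of $k$: as $E_{\bar k}$ is a discrete union of $\bar k$-points, $\phi$ is surjective if and only if $\phi(\bar k)\colon H(\bar k)\to E(\bar k)$ is, and by the defining property \eqref{wr} of Weil restriction this map is $p_{G}$ evaluated on $(A\otimes_{k}\bar k)$-points. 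Decomposing $A\otimes_{k}\bar k$ as a finite product $\prod_{j}B_{j}$ of artinian local rings, each with residue field $\bar k$ and (as $A$ has a unique prime) a local $A$-algebra, it suffices to see that $G(B_{j})\to\pi_{0}(G)(B_{j})$ is surjective. Given a point of $\pi_{0}(G)(B_{j})$, its reduction modulo the nilpotent maximal ideal of $B_{j}$ is a $\bar k$-point that lifts to $G(\bar k)$, because the fibre of the smooth surjective morphism $p_{G}$ over it is a non-empty smooth $\bar k$-scheme and so has a rational point; formal smoothness of $p_{G}$ then extends this lift along the nilpotent immersion $\spec\bar k\hookrightarrow\spec B_{j}$, producing the required point of $G(B_{j})$. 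This yields surjectivity, hence faithful flatness of $\phi$, and with it the canonical isomorphism $\pi_{0}(\Re_{A/k}(G))\simeq E\simeq\Re_{k^{\prime}/k}(\pi_{0}(G_{\mathrm{s}}))$ of the proposition.
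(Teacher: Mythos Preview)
Your proposal is correct and follows essentially the same route as the paper's proof: apply $\Re_{A/k}$ to the sequence $1\to G^{0}\to G\to\pi_{0}(G)\to 1$, identify $\Re_{A/k}(G^{0})$ with $\Re_{A/k}(G)^{0}$ via the geometric connectedness fact, show that $\Re_{A/k}(G)\to\Re_{A/k}(\pi_{0}(G))$ is surjective, and finish via Proposition~\ref{et-eq}(ii). The only differences are cosmetic: where the paper cites \cite[\S7.6, Proposition~2]{blr}, \cite[Proposition~A.5.9]{cgp} and \cite[Corollary~A.5.4(1)]{cgp} for openness/closedness, geometric connectedness, and surjectivity respectively, you supply direct arguments (fibre over the identity component of an \'etale target, the affine-fibration picture over $\bar k$, and the infinitesimal lifting argument for surjectivity), which is entirely fine and in fact mirrors what those references do.
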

\begin{proof} The existence assertion follows from Proposition \ref{et-eq}(ii). Now, since $G^{\e 0}$ is an open and closed subgroup scheme of $G$, $\Re_{\le A/k}(G^{\e 0})$ is an open and closed subgroup scheme of $\Re_{\le A/k}(G\le)$ by \cite[\S7.6, Proposition 2, p.~192]{blr}. Further, since $(G^{\e 0})_{\rm s}$ is geometrically connected \cite[$\text{VI}_{\text{A}}$, Proposition 2.4(i)]{sga3}, $\Re_{\le A/k}(G^{\e 0})$ is geometrically connected as well by \cite[Proposition A.5.9]{cgp}\footnote{The proof in [loc.cit.] remains valid if the quasi-projectivity assumption made there is replaced by the condition that the Weil restrictions involved exist.}. It follows that $\Re_{\le A/k}(G^{\e 0})=\Re_{\le A/k}(G\le)^{0}$. Now, since $p_{\le G}\colon G\to\pi_{0}(G\le)$ is smooth and surjective, the induced smooth morphism $\Re_{\le A/k}(G\le)\to \Re_{\le A/k}(\pi_{0}(G\le))$ is surjective by \cite[Corollary A.5.4(1)]{cgp}. Thus there exists an exact and commutative diagram of sheaves of groups on $(\textrm{Sch}/k)_{\fppf}$
\[
\begin{CD}
1@>>>\Re_{\le A\be/k}(G\le)^{0}@>>>\Re_{\le A\be/k}(G\le)@>>>\pi_{0}(\Re_{\le A\be/k}(G\le)) @>>>1
\\
@. @| @| @VV\wr V\\
1@>>>\Re_{\le A\be/k}(G^{\e 0})@>>> \Re_{\le A/k}(G\le)@>>>
\Re_{\le A/k}(\pi_{0}(G\le))@>>>1,
\end{CD}
\]
which yields a canonical isomorphism of \'etale $k$-group schemes $ \pi_{0}(\Re_{\le A\be/k}(G\le))\simeq \Re_{\le A/k}(\pi_{0}(G\le))$. Now Proposition \ref{et-eq}(ii) completes the proof.
\end{proof}

{\it We may now prove Theorem \ref{main}.}

The existence assertion follows from Proposition \ref{et-eq}(ii). Let $A:=R^{\e\prime}\otimes_{R}k=R^{\e\prime}/\mm R^{\e\prime}$, which is a finite local $k$-algebra with residue field $k^{\e\prime}$ and let $S^\prime=\spec R^\prime$. Then $G^{\e\prime}_{\! A}:=G^{\e\prime}\!\times_{S^{\le\prime}}\be\spec A$ is a smooth $A$-group scheme. Now, since $\Re_{\le R^{\e\prime}\be/\be R}(G^{\e\prime}\le)$ exists by hypothesis, $\Re_{A/k}(G^{\e\prime}_{\!  A}\le)$ exists as well and the map \eqref{wrbc} for $(X^{\prime}\be,\lbe S^{\e\prime}\be/S,T\e)=(G^{\e\prime}\be,\lbe R^{\e\prime}\be/R, k\le)$ is an isomorphism of smooth $k$-group schemes $\Re_{\le R^{\e\prime}\be/\be R}(G^{\e\prime}\le)_{{\rm{s}}}\simeq
\Re_{A/k}(G^{\e\prime}_{\!  A}\le)$. On the other hand, Proposition \ref{op4} yields a canonical isomorphism of \'etale $k$-group schemes $\pi_{0}(\Re_{A/k}(G^{\e\prime}_{\! A})\e)\simeq\Re_{\le k^{\le\prime}\be/k}(\pi_{0}(G^{\e\prime}_{{\rm{s}}}))$, whence the theorem follows.
\medskip

\section{Some applications}

We present the following consequences of Theorem \ref{main}.

\begin{corollary} \label{cor2} Let $R^{\e\prime}\be/R$ be a finite extension of  local rings such that the associated extension of residue fields $k^{\e\prime}\be/k$ is {\rm purely inseparable}. Let $G^{\e\prime}$ be a smooth $R^{\e\prime}$-group scheme such that $\Re_{\le R^{\e\prime}\be/\be R}(G^{\e\prime}\le)$ exists. Then there exists a canonical isomorphism of \'etale $k^{\e\prime}$-group schemes $\pi_{0}(\Re_{\le R^{\e\prime}\be/\be R}(G^{\e\prime}\le)_{{\rm{s}}})_{k^{\le\prime}}\simeq\pi_{0}(G^{\e\prime}_{{\rm{s}}}\le)$.
\end{corollary}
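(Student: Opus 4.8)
The plan is to obtain this by base-changing the isomorphism of Theorem \ref{main} along the residue field extension $k\hookrightarrow k^{\e\prime}$. Writing $X:=\pi_{0}(G^{\e\prime}_{\rm s})$, which is an \'etale $k^{\e\prime}$-group scheme, Theorem \ref{main} already provides the existence of $\Re_{\le k^{\le\prime}\be/k}(X)$ as an \'etale $k$-group scheme together with an isomorphism $\pi_{0}(\Re_{\le R^{\e\prime}\be/\be R}(G^{\e\prime}\le)_{\rm s})\simeq\Re_{\le k^{\le\prime}\be/k}(X)$. Hence it suffices to produce a canonical isomorphism of \'etale $k^{\e\prime}$-group schemes $\Re_{\le k^{\le\prime}\be/k}(X)_{k^{\le\prime}}\simeq X$; base changing the displayed isomorphism to $k^{\e\prime}$ and composing then yields the corollary.

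To construct the required isomorphism I would set $A:=k^{\e\prime}\otimes_{k}k^{\e\prime}$, regarded as a $k^{\e\prime}$-algebra through its second factor, and apply the base change formula \eqref{wrbc} with $(X^{\prime},\le S^{\le\prime}\be/S,\le T\e)=(X,\spec k^{\e\prime}\be/\spec k,\spec k^{\e\prime})$. This gives a canonical isomorphism $\Re_{\le k^{\le\prime}\be/k}(X)_{k^{\le\prime}}\simeq \Re_{A/k^{\le\prime}}(X_{\!A})$, where $X_{\!A}$ denotes the base change of $X$ along the first-factor map $k^{\e\prime}\to A$. Since $k^{\e\prime}\be/k$ is finite, $A$ is a finite $k^{\e\prime}$-algebra, and since $k^{\e\prime}\be/k$ is purely inseparable, the kernel of the multiplication map $\mu\colon A\to k^{\e\prime}$ consists of nilpotent elements (each generator $a\otimes 1-1\otimes a$ is killed by a suitable power of $\mr{char}\,k$; the case $\mr{char}\,k=0$ is trivial, as then $k^{\e\prime}=k$). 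Hence $A$ is local with maximal ideal $\ker\mu$ and residue field $k^{\e\prime}$, and $\mu$ restricts to the identity on each of the two copies of $k^{\e\prime}$ inside $A$. Consequently the special fiber of $X_{\!A}$ is canonically identified with $X$, while $X_{\!A}$ is \'etale over the finite local $k^{\e\prime}$-algebra $A$; Proposition \ref{et-eq}(ii), applied with $k^{\e\prime}$ in place of $k$, then gives $\Re_{A/k^{\le\prime}}(X_{\!A})\simeq \Re_{\le k^{\le\prime}\be/k^{\le\prime}}((X_{\!A})_{\rm s})=(X_{\!A})_{\rm s}\simeq X$. Since all morphisms involved are canonical, the composite isomorphism $\Re_{\le k^{\le\prime}\be/k}(X)_{k^{\le\prime}}\simeq X$ is automatically a homomorphism of \'etale $k^{\e\prime}$-group schemes.

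The main obstacle is purely a matter of careful bookkeeping in the second step: one must keep track of which factor of $k^{\e\prime}\otimes_{k}k^{\e\prime}$ serves as the new base ring and which as the new source in \eqref{wrbc}, and then verify that the induced map on residue fields is the identity, so that the special fiber of $X_{\!A}$ really is $X$ itself and not a Frobenius twist of it. Granting that identification, the corollary is a formal consequence of Theorem \ref{main} and Proposition \ref{et-eq}.
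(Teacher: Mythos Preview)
Your argument is correct. Both you and the paper reduce the statement to showing that, for the \'etale $k^{\e\prime}$-group scheme $X=\pi_{0}(G^{\e\prime}_{\rm s})$, the base change $\Re_{\le k^{\le\prime}\be/k}(X)_{k^{\le\prime}}$ is canonically isomorphic to $X$ when $k^{\e\prime}/k$ is purely inseparable. The paper establishes this by invoking an external fact (cited to \cite{bga}): the counit $q_{\e X,\e k^{\le\prime}\be/k}\colon \Re_{\le k^{\le\prime}\be/k}(X)_{k^{\le\prime}}\to X$ is an isomorphism whenever the base morphism is a finite, locally free universal homeomorphism. From this and the commutativity of the displayed square, the paper moreover identifies the resulting isomorphism explicitly as $\pi_{0}((q_{\le G^{\le\prime}\be,\le R^{\le\prime}\be/\be R}\lbe)_{k^{\le\prime}})$, i.e., as induced by the natural counit of the Weil restriction on $G^{\e\prime}$ itself.

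Your route is different in that it avoids any external reference: you use the base-change formula \eqref{wrbc} to pass to $A=k^{\e\prime}\otimes_{k}k^{\e\prime}$, observe that $A$ is a finite local $k^{\e\prime}$-algebra with residue field $k^{\e\prime}$ (this is exactly where purely inseparable enters), and then apply Proposition~\ref{et-eq}(ii) to conclude. This makes the proof entirely self-contained within the paper, which is a genuine gain. What you give up is the explicit identification of the isomorphism with the counit map; the corollary as stated does not demand this, though it is worth noting that your composite is in fact the same map $q_{\e X,\e k^{\le\prime}\be/k}$.

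One cosmetic point: the phrase ``killed by a suitable power of $\mr{char}\,k$'' should read ``whose $p^{\le n}$-th power vanishes for suitable $n$''; the intended computation $(a\otimes 1-1\otimes a)^{p^{n}}=a^{p^{n}}\!\otimes 1-1\otimes a^{p^{n}}=0$ is clear, and your subsequent conclusion that $\ker\mu$ is the nilradical (hence $A$ is local with residue field $k^{\e\prime}$) is correct.
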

\begin{proof} If $\chi\colon\pi_{0}(\Re_{\le R^{\e\prime}\be/\be R}(G^{\e\prime}\le)_{{\rm{s}}})\overset{\!\sim}{\to}\Re_{\le k^{\le\prime}\be/k}(\pi_{0}(G^{\e\prime}_{{\rm{s}}}\le))$ is the isomorphism of Theorem \ref{main}, then the diagram of \'etale $k^{\le\prime}$-group schemes
\[
\begin{CD}
\pi_{0}(\Re_{\le R^{\e\prime}\be/\be R}(G^{\e\prime}\le)_{{\rm{s}}})_{k^{\le\prime}} @>\chi_{ k^{\le\prime}}>>\Re_{\le k^{\le\prime}\be/k}(\pi_{0}(G^{\e\prime}_{{\rm{s}}}\le))_{k^{\le\prime}}\\
@VV\pi_{0}((q_{\le G^{\le\prime}\be,\le R^{\le\prime}\be/\be R}\lbe)_{k^{\le\prime}})V   @VV q_{\e\pi_{0}(G^{\le\prime}_{\rm{s}}\le)\lbe,\, k^{\le\prime}/k}V\\
\pi_{0}(G^{\e\prime}_{\textrm{s}}\le) @= \pi_{0}(G^{\e\prime}_{\textrm{s}}\le)
\end{CD}
\]
commutes. Further, since $\spec k^{\e\prime}\to\spec k$ is a finite and locally free universal homeomorphism, the right-hand vertical map in the above diagram is an isomorphism\footnote{Proofs of these facts, currently available at \url{http://arxiv.org/abs/1501.05621v3}\,, will appear in \cite{bga}}.
Thus the left-hand vertical map is an isomorphism as well, which completes the proof. 
\end{proof}

\begin{corollary}\label{cor1} Let $R^{\e\prime}\be/R$ be a finite extension of discrete valuation rings with associated residue and fraction field extensions $k^{\le\prime}\be/k$ and $K^{\le\prime}\be/K$. Let $G^{\e\prime}$ be a smooth, separated and commutative $R^{\e\prime}$-group scheme which is a N\'eron model\,\footnote{\,Only assumed to be locally of finite type \cite[\S 10.1. Definition 1, p.~289]{blr}.} of its generic fiber $G^{\e\prime}_{\lbe\eta}$. Then $\Re_{\le R^{\e\prime}\be/\be R}(G^{\e\prime}\le)$ exists, is a N\'eron model of $\Re_{\le K^{\prime}\be/\be K}\lbe(G^{\e\prime}_{\lbe\eta}\lbe)$ and there exists a canonical isomorphism of \'etale $k$-group schemes $\pi_{0}(\Re_{\le R^{\e\prime}\be/\be R}(G^{\e\prime}\le)_{{\rm{s}}})\simeq\Re_{\le k^{\le\prime}\be/k}(\pi_{0}(G^{\e\prime}_{\lbe \rm s}\le))$.
\end{corollary}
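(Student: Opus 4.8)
The plan is to establish the three assertions of Corollary \ref{cor1} in turn, deducing the existence and N\'eron property from the already-cited existence criteria for Weil restrictions and from \cite[\S7.6, Proposition 6, p.~197]{blr}, and then obtaining the component-group isomorphism as an immediate instance of Theorem \ref{main}. First I would note that $R^{\e\prime}\be/R$ being a finite extension of discrete valuation rings is in particular a finite flat extension of local rings, so the hypotheses of Theorem \ref{main} are in force once we know $\Re_{\le R^{\e\prime}\be/\be R}(G^{\e\prime}\le)$ exists. For the existence, I would invoke the fact that a smooth separated group scheme of finite type over a field admits finitely many points in an affine open (by quasi-projectivity of such group schemes over a field, or directly via \cite[\S7.6, Theorem 4, p.~194]{blr}); applied to the special and generic fibers of $G^{\e\prime}$ this gives the criterion of \cite[\S7.6, Theorem 4]{blr}. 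Here one must be slightly careful because $G^{\e\prime}$ is only assumed locally of finite type as a N\'eron model, so I would first reduce to the finite-type situation by passing to the open subgroup scheme $G^{\e\prime 0}$ together with translates by a chosen finite set of components, or alternatively note that the existence criterion only concerns finite sets of points in a fixed fiber and each such set lies in a quasi-compact, hence finite-type, open subgroup scheme.

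Next I would verify that $\Re_{\le R^{\e\prime}\be/\be R}(G^{\e\prime}\le)$ is a N\'eron model of $\Re_{\le K^{\prime}\be/\be K}\lbe(G^{\e\prime}_{\lbe\eta}\lbe)$. The base change isomorphism \eqref{wrbc} applied with $T=\spec K$ identifies the generic fiber of $\Re_{\le R^{\e\prime}\be/\be R}(G^{\e\prime}\le)$ with $\Re_{\le K^{\prime}\be/\be K}\lbe(G^{\e\prime}_{\lbe\eta}\lbe)$, so the claim is about the N\'eron mapping property. Since Weil restriction along a finite flat morphism is right adjoint to base change, for any smooth $R$-scheme $Y$ one has $\Hom_{R}\e(Y,\Re_{\le R^{\e\prime}\be/\be R}(G^{\e\prime}\le))=\Hom_{R^{\e\prime}}\e(Y_{R^{\prime}},G^{\e\prime}\le)$, and likewise over $K$; because $R^{\e\prime}\be/R$ is finite and flat, $Y_{R^{\prime}}$ is again smooth over $R^{\e\prime}$, and by the N\'eron property of $G^{\e\prime}$ the restriction map $\Hom_{R^{\e\prime}}(Y_{R^{\prime}},G^{\e\prime}\le)\to\Hom_{K^{\prime}}((Y_{R^{\prime}})_{K^{\prime}},G^{\e\prime}_{\lbe\eta}\lbe)$ is bijective. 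Chasing these adjunctions through yields the N\'eron mapping property for $\Re_{\le R^{\e\prime}\be/\be R}(G^{\e\prime}\le)$; this is precisely the content of \cite[\S7.6, Proposition 6, p.~197]{blr}, which I would simply cite, verifying only that its hypotheses (finite flat base, existence of the Weil restriction) are met.

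Finally, the asserted isomorphism $\pi_{0}(\Re_{\le R^{\e\prime}\be/\be R}(G^{\e\prime}\le)_{{\rm{s}}})\simeq\Re_{\le k^{\le\prime}\be/k}(\pi_{0}(G^{\e\prime}_{\lbe \rm s}\le))$ is nothing more than the conclusion of Theorem \ref{main}, now applicable since all its hypotheses have been checked. I expect the main obstacle to be the bookkeeping around the "locally of finite type" caveat in the definition of N\'eron model: one must ensure that the existence criterion for the Weil restriction and the N\'eron mapping property argument both go through without a finite-type hypothesis on $G^{\e\prime}$, which is handled by the standard device of covering $G^{\e\prime}$ by quasi-compact open subgroup schemes (translates of $G^{\e\prime 0}$) and using that each finite set of points in a fiber lies in one such piece. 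Everything else is a formal consequence of adjunction properties of Weil restriction recorded in Section 2 together with the already-established Theorem \ref{main}.
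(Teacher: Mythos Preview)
Your proposal is correct and follows essentially the same approach as the paper. The paper compresses your first two steps into a single citation---\cite[\S10.1, proof of Proposition 4, p.~291]{blr}---which already packages both the existence of $\Re_{R^{\e\prime}/R}(G^{\e\prime})$ and its N\'eron property (including the locally-of-finite-type case), and then invokes Theorem~\ref{main} exactly as you do; your unpacking of that citation into the existence criterion of \cite[\S7.6, Theorem 4]{blr} plus the adjunction argument is precisely what underlies the cited proof in \cite{blr}.
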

\begin{proof} The existence of $\Re_{\le R^{\e\prime}\be/\be R}(G^{\e\prime}\le)$, as well as the fact that it is a N\'eron model of $\Re_{\le K^{\prime}\!/\be K}\lbe(G^{\e\prime}_{\lbe \eta}\lbe)$, follow from \cite[\S10.1, proof of Proposition 4, p.~291]{blr}. The corollary is now immediate from the theorem. 
\end{proof}

\begin{remark}\label{rem} In the case where $R^{\e\prime}\be/R$ is a finite extension of discrete valuation rings, the isomorphism of Corollary \ref{cor1} was discussed in \cite[Proposition 3.19]{lor2} when the associated extension of fraction fields $K^{\le\prime}\be/K$ is Galois, $k$ is perfect and $G^{\e\prime}$ is the N\'eron model of an abelian variety over $K^{\prime}$. Corollary \ref{cor2} is proved in \cite[Proposition 1.1]{bb} (see also \cite[proof of Theorem 1]{ell}) when $K^{\le\prime}\be/K$ is separable, $G^{\e\prime}$ is as above and $R$ is henselian. The proofs of both results make use of a filtration introduced by B. Edixhoven.
\end{remark}

Let $R^{\e\prime}\be/R$ be a finite extension of discrete valuation rings with associated fraction field extension $K^{\le\prime}\be/K$ and residue field extension $k^{\e\prime}\be/k$. Let  $\kbar$ be a fixed algebraic closure of $k$ containing $k^{\e\prime}$ and let $\ksep$ and  $(k^{\e\prime})^{\rm s}$ denote, respectively, the separable closures of $k$ and $k^{\e\prime}$ inside $\kbar$. Now let $A_{K^{\le\prime}}$ be an abelian variety over $K^{\le\prime}$ with N\'eron model $A$ over $R^{\e\prime}$ and recall from \cite[IX, (1.2.2)]{sga7} Grothendieck's pairing
\begin{equation}\label{gpair}
\pi_{0}(\lbe A_{\le\textrm{s}}\lbe )((k^{\e\prime})^{\rm s})\times\pi_{0}(\lbe A^{\vee}_{\le\textrm{s}})((k^{\e\prime})^{\rm s})\to \Q/\Z\e,
\end{equation}
where $A^{\vee}$ is the N\'eron model of the abelian variety $A^{\vee}_{K^{\le\prime}}$ dual to $A_{K^{\le\prime}}$. 
When the preceding pairing is perfect, $\pi_{0}(\lbe A_{\le\textrm{s}}\lbe )((k^{\e\prime})^{\rm s})$ and $\pi_{0}(\lbe A^{\vee}_{\le\textrm{s}})((k^{\e\prime})^{\rm s})$ are isomorphic as abelian groups.  As noted by D. Lorenzini, it is not known, in general, whether $\pi_{0}(\lbe A_{\le\textrm{s}}\lbe )((k^{\e\prime})^{\rm s})$ and $\pi_{0}(\lbe A^{\vee}_{\le\textrm{s}})((k^{\e\prime})^{\rm s})$ are always isomorphic as abelian groups, i.e., even when \eqref{gpair} is not perfect. See \cite[p.~1, foonote]{lor} and \cite[p.~ 2032]{lor2}. Regarding this question, we observe the following consequence of  the previous corollary.

\begin{corollary}\label{cor3} Let $R^{\e\prime}\be/R$ be a finite extension of discrete valuation rings with associated fraction field extension $K^{\e\prime}\be/K$. Let $A_{K^{\prime}}$ be an abelian variety over $K^{\e\prime}$ with dual abelian variety $A^{\vee}_{K^{\le\prime}}$ and let $A$ and $A^{\vee}$ denote, respectively, the N\'eron models of $A_{K^{\prime}}$ and $A^{\vee}_{K^{\le\prime}}$ over $R^{\e\prime}$. Assume that Grothendieck's pairing \eqref{gpair} is perfect. Then there exists an isomorphism of abelian groups
\[
\pi_{0}(\Re_{\le R^{\e\prime}\be/\be R}(A\le)_{{\rm{s}}})(\ksep)\simeq \pi_{0}(\Re_{\le R^{\e\prime}\be/\be R}(A^{\vee}\le)_{{\rm{s}}}\lbe)(\ksep).
\]
\end{corollary}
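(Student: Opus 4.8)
The plan is to reduce everything to Corollary \ref{cor1} applied separately to $A$ and to $A^{\vee}$, and then to transport Grothendieck's pairing through the resulting isomorphisms. First I would invoke Corollary \ref{cor1} for the N\'eron model $A$ of $A_{K^{\prime}}$ and also for the N\'eron model $A^{\vee}$ of $A^{\vee}_{K^{\prime}}$; this is legitimate since each is a smooth, separated, commutative $R^{\e\prime}$-group scheme which is a N\'eron model of its generic fiber. This yields canonical isomorphisms of \'etale $k$-group schemes
\[
\pi_{0}(\Re_{\le R^{\e\prime}\be/\be R}(A\le)_{{\rm{s}}})\simeq\Re_{\le k^{\le\prime}\be/k}(\pi_{0}(A_{\lbe \rm s}\le))
\qquad\text{and}\qquad
\pi_{0}(\Re_{\le R^{\e\prime}\be/\be R}(A^{\vee}\le)_{{\rm{s}}})\simeq\Re_{\le k^{\le\prime}\be/k}(\pi_{0}(A^{\vee}_{\lbe \rm s}\le)).
\]

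Next I would pass to $\ksep$-points. For an \'etale $k$-group scheme of the form $\Re_{\le k^{\le\prime}\be/k}(E)$ with $E$ an \'etale $k^{\e\prime}$-group scheme, the universal property of Weil restriction \eqref{wr}, together with the compatibility \eqref{bc} applied to the extension $\ksep/k$ and the decomposition $k^{\e\prime}\otimes_{k}\ksep\simeq\prod (k^{\e\prime})^{\rm s}$ (a finite product of copies of $(k^{\e\prime})^{\rm s}$, one for each $k$-embedding $k^{\e\prime}\hookrightarrow\kbar$, using that $k^{\e\prime}\be/k$ has finite separable part and that $(k^{\e\prime})^{\rm s}$ is the separable closure of $k^{\e\prime}$ in $\kbar$), gives a natural bijection
\[
\Re_{\le k^{\le\prime}\be/k}(E)(\ksep)\;\simeq\;\textstyle\prod_{\,\sigma}\,E^{\sigma}((k^{\e\prime})^{\rm s}),
\]
where the product runs over the $k$-embeddings $\sigma\colon k^{\e\prime}\hookrightarrow\kbar$ and $E^{\sigma}$ denotes the base change of $E$ along $\sigma$. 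Applying this with $E=\pi_{0}(A_{\lbe\rm s})$ and with $E=\pi_{0}(A^{\vee}_{\lbe\rm s})$, I obtain identifications of abelian groups
\[
\pi_{0}(\Re_{\le R^{\e\prime}\be/\be R}(A\le)_{{\rm{s}}})(\ksep)\;\simeq\;\textstyle\prod_{\,\sigma}\,\pi_{0}(A_{\lbe\rm s})^{\sigma}((k^{\e\prime})^{\rm s}),
\]
and likewise for $A^{\vee}$.

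Finally, for each embedding $\sigma$, Grothendieck's pairing \eqref{gpair}, assumed perfect, identifies $\pi_{0}(A_{\lbe\rm s})((k^{\e\prime})^{\rm s})$ with the Pontryagin-type dual (i.e.\ $\Hom$ into $\Q/\Z$) of $\pi_{0}(A^{\vee}_{\lbe\rm s})((k^{\e\prime})^{\rm s})$; since both are finite abelian groups, this forces them to be (non-canonically) isomorphic as abelian groups, and the same holds after base change along $\sigma$ because $(k^{\e\prime})^{\rm s}$ is stable under the automorphisms of $\kbar$ fixing $k$ and the pairing is Galois-equivariant, so $\pi_{0}(A_{\lbe\rm s})^{\sigma}((k^{\e\prime})^{\rm s})\simeq\pi_{0}(A^{\vee}_{\lbe\rm s})^{\sigma}((k^{\e\prime})^{\rm s})$. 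Taking the product over all $\sigma$ and combining with the two displays above yields the asserted isomorphism of abelian groups. The main obstacle, and the step requiring the most care, is the bookkeeping in the second paragraph: correctly describing $\ksep$-points of a Weil restriction along the possibly inseparable extension $k^{\e\prime}\be/k$ in terms of $(k^{\e\prime})^{\rm s}$-points of the conjugates, and checking that the perfectness hypothesis — which a priori only concerns the single pairing \eqref{gpair} over $(k^{\e\prime})^{\rm s}$ — genuinely transfers to each conjugate $\pi_{0}(A_{\lbe\rm s})^{\sigma}$; this last point follows from the canonical (hence $\mathrm{Gal}(\kbar/k)$-equivariant) nature of Grothendieck's pairing, but it should be spelled out.
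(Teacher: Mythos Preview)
Your overall strategy matches the paper's: apply Corollary~\ref{cor1} to $A$ and to $A^{\vee}$, pass to $\ksep$-points so that $\pi_{0}(\Re_{R^{\prime}\!/R}(A)_{\rm s})(\ksep)\simeq\pi_{0}(A_{\rm s})(k^{\prime}\otimes_{k}\ksep)$, decompose the tensor product, and invoke the perfectness of \eqref{gpair}. The difficulty you yourself flag in your last paragraph is real, however, and your proposed treatment of it is not correct.

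The assertion $k^{\prime}\otimes_{k}\ksep\simeq\prod (k^{\prime})^{\rm s}$ does not hold as an isomorphism of $\ksep$-algebras in general, and the ensuing introduction of conjugates $E^{\sigma}$ together with the appeal to Galois equivariance of Grothendieck's pairing is a detour built on this misstep. What one actually has is that $k^{\prime}\otimes_{k}\ksep$ is a finite \emph{reduced} $\ksep$-algebra, hence a product $\prod_{i\in I}k_{i}$ of finite field extensions of $\ksep$; since $\ksep$ is separably closed, each $k_{i}/\ksep$ is purely inseparable, but the $k_{i}$ need not be isomorphic to $(k^{\prime})^{\rm s}$ as $\ksep$-algebras. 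This is precisely where the paper's argument diverges from yours: because $\spec k_{i}\to\spec\ksep$ is a universal homeomorphism and $\pi_{0}(A_{\rm s})$ is \'etale, the canonical map $\pi_{0}(A_{\rm s})(\ksep)\to\pi_{0}(A_{\rm s})(k_{i})$ is a bijection for every $i$; the same reasoning applied to the purely inseparable extension $(k^{\prime})^{\rm s}/\ksep$ gives $\pi_{0}(A_{\rm s})(\ksep)\simeq\pi_{0}(A_{\rm s})((k^{\prime})^{\rm s})$. Thus $\pi_{0}(A_{\rm s})(k^{\prime}\otimes_{k}\ksep)\simeq\pi_{0}(A_{\rm s})((k^{\prime})^{\rm s})^{I}$ directly, with no twists in sight. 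The single perfectness hypothesis on \eqref{gpair} then suffices; no transport of the pairing to ``conjugates'' is needed.
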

\begin{proof} By Corollary \ref{cor1}, we have 
\[
\pi_{0}(\Re_{\le R^{\e\prime}\be/\be R}(A\le)_{{\rm{s}}})(\ksep)\simeq\Re_{\le k^{\le\prime}\be/k}(\pi_{0}(A_{\le\rm s}\le))(\ksep)=\pi_{0}(A_{\le \rm s}\le)(k^{\le\prime}\be\otimes_{k}\be\ksep).
\]
Now, by
\cite[V, \S6, no.~7, Theorem 4, p.~A.V.34]{bou} and an inductive limit argument, $k^{\le\prime}\otimes_{k}\ksep$ is a finite reduced $\ksep$-algebra. Consequently (see, e.g., \cite[Theorem 8.7, p.~90]{am}), $k^{\le\prime}\otimes_{k}\ksep\simeq\prod_{i\in I}k_{i}$ for some finite set $I$, where each $k_{i}$ is a finite field extension of $\ksep$. Thus 
$\pi_{0}(A_{\le \rm s}\le)(k^{\le\prime}\lbe\otimes_{k}\lbe\ksep)\simeq\prod_{i\in I} 
\pi_{0}(A_{\le \rm s}\le)(k_{i})$. Now, since each $k_{i}/\ksep$ is purely inseparable, the associated morphism $\spec k_{i}\to\spec \ksep$ is a universal homeomorphism. Thus, by \cite[VIII, Corollary 1.2]{sga4}, the canonical homomorphism $\pi_{0}(A_{\le \rm s}\le)(\ksep)\to \pi_{0}(A_{\le \rm s}\le)(k_{i})$ is an isomorphism for each $i\in I$. Further, since $(k^{\e\prime})^{\rm s}/\ksep$ is  also purely inseparable, the canonical homomorphism $\pi_{0}(A_{\le \rm s}\le)(\ksep)\to \pi_{0}(A_{\le \rm s}\le)((k^{\e\prime})^{\rm s})$ is an isomorphism as well. We conclude that $\pi_{0}(A_{\le \rm s}\le)(k^{\le\prime}\!\otimes_{k}\!\ksep)\simeq \pi_{0}(A_{\le \rm s}\le)((k^{\e\prime})^{\rm s})^{I}$. Similarly, $\pi_{0}(A^{\vee}_{\le \rm s}\le)(k^{\le\prime}\!\otimes_{k}\!\ksep)\simeq \pi_{0}(A^{\vee}_{\le \rm s}\le)((k^{\e\prime})^{\rm s})^{I}$. Since $\pi_{0}(\lbe A_{\le\textrm{s}}\lbe )((k^{\e\prime})^{\rm s})$ and $\pi_{0}(\lbe A^{\vee}_{\le\textrm{s}})((k^{\e\prime})^{\rm s})$ are isomorphic by the perfectness of \eqref{gpair}, the corollary follows.
\end{proof}

The interest of the above corollary is that, when $K^{\le\prime}\!/K$ is separable so that $\Re_{\le K^{\prime}\!/\lbe K}(A_{K^{\prime}}\be)$ is an abelian variety (see \cite[Proposition 4.1]{ed}), Grothendieck's pairing
\begin{equation}\label{gpair2}
\pi_{0}(\Re_{\le R^{\e\prime}\be/\lbe R}(A\le)_{{\rm{s}}})(\ksep)\times\pi_{0}(\Re_{\le R^{\e\prime}\be/\lbe R}(A^{\vee}\le)_{{\rm{s}}}\lbe)(\ksep)\to\Q/\Z
\end{equation}
may not be perfect \cite[comment after Corollary 2.2]{bb}, although, by the corollary, the groups involved are isomorphic when \eqref{gpair} is perfect. In other words, Corollary \ref{cor3} suggests that the answer to Lorenzini's question may well be positive.

\end{document}